\newlength\tindent
\numberwithin{equation}{section}
\theoremstyle{plain}
\newtheorem{conjecture}{Conjecture}
\newtheorem{theorem}{Theorem}
\theoremstyle{plain}
\newtheorem{lemma}{Lemma}
\newtheorem{corollary}{Corollary}
\theoremstyle{definition}
\newtheorem{proof}{}
\newtheorem{remark}{Remark}
\renewcommand{\abstract}{\begin{bf}{Abstract.}\end{bf}}
\begin{document}

\title{Intervals between numbers that are sums of two squares}
\author{Alexander Kalmynin}
\address{National Research University Higher School of Economics, Russian Federation, Math Department, International Laboratory of Mirror Symmetry and Automorphic Forms}
\email{alkalb1995cd@mail.ru}
\date{}
\udk{}
\maketitle
\markright{Intervals between sums of two squares}

\begin{fulltext}
\begin{abstract}
In this paper, we improve the moment estimates for the gaps between numbers that can be represented as a sum of two squares of integers. We consider certain sum of Bessel functions and prove the upper bound for its weighted mean value. This bound provides estimates for the $\gamma$-th moments of gaps for all $\gamma\leq 2$.
\end{abstract}
\section{Introduction}

Let $\mathcal S=\{s_1<s_2<\ldots<s_n<\ldots\}=\{1,2,4,5,8,\ldots\}\subset \mathbb N$ be the set of all natural numbers that are expressible as the sum of two squares of integers. The classical area in the research of the properties of this set is the study of gaps between consecutive elements, i.e. the quantity $s_{n+1}-s_n$ or, equivalently, the value distribution of the distance function of $\mathcal S$

$$R(x)=\min_{n \in \mathcal S} |x-n|$$

for $x\to +\infty$.

The following fact is well known (see \cite{Kar},\cite{BC}):

\begin{theorem}
The inequality

$$R(x)\ll x^{1/4}$$

holds.

\end{theorem}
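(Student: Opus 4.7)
The plan is to produce, for each large real $x$, an explicit element $n\in\mathcal S$ with $0\le x-n\ll x^{1/4}$ by a two-step greedy approximation, using only the trivial fact that $a^2+b^2\in\mathcal S$ for every pair of nonnegative integers $a,b$.

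First I would choose the larger square. Set $m=\lfloor\sqrt x\rfloor$, so that $m^2\le x<(m+1)^2$ and hence
\[
0\le r:=x-m^2<2m+1\le 2\sqrt x+1.
\]
Thus $r$ is a nonnegative integer of size at most $O(x^{1/2})$.

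Next I would approximate $r$ by a single square. Set $j=\lfloor\sqrt r\rfloor$, so $j^2\le r<(j+1)^2$, and therefore
\[
0\le r-j^2<2j+1\le 2\sqrt r+1\le 2\sqrt{2\sqrt x+1}+1\ll x^{1/4}.
\]
Now $n:=m^2+j^2$ is a sum of two squares of nonnegative integers, hence $n\in\mathcal S$, and
\[
0\le x-n=r-j^2\ll x^{1/4},
\]
so $R(x)\le x-n\ll x^{1/4}$.

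There is essentially no obstacle here: the only mildly delicate point is noticing that after subtracting off $m^2$ the residual $r$ has size $O(x^{1/2})$, so one more square-root approximation gives the $x^{1/4}$ saving. No use of the multiplicative structure of $\mathcal S$ (Fermat's theorem on primes $p\equiv 1\pmod 4$, Gaussian integers, etc.) is needed for this upper bound — those tools are what make the theorem hard to improve, not what makes it true.
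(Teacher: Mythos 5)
Your proposal is correct and is essentially identical to the paper's own argument: writing $f(x)=x-[\sqrt x]^2$, the paper's two applications of $f$ correspond exactly to your two greedy steps, with $m^2=x-f(x)$, $r=f(x)$, $j^2=f(x)-f(f(x))$, and $r-j^2=f(f(x))\ll x^{1/4}$. Both proofs use only the trivial membership $a^2+b^2\in\mathcal S$ and the bound $0\le y-[\sqrt y]^2\ll\sqrt y$, so there is nothing further to compare.
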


\begin{proof}[Proof]
Let us note that if $f(x)=x-[\sqrt x]^2$, where $[y]$ is an integer part of the number $y$, then $0\leq f(x) \ll \sqrt x$ for $x\gg 1$  and $x-f(x)=[\sqrt x]^2$ is a square of integer. Therefore,

$$x^{1/4}\gg f(f(x))=x-(x-f(x)+f(x)-f(f(x)))\geq 0.$$

But $x-f(x)$ and $f(x)-f(f(x))$ are squares of integers, so for some integers $a$ and $b$ we have

$$|x-a^2-b^2|\ll x^{1/4},$$

which was to be proved.
\end{proof}

This estimate was probably known to L.\,Euler and, unfortunately, was not improved after that (it is still unknown if the identity $R(x)=o(x^{1/4})$ is true). Conjecturally, the correct order of growth of $R(x)$ is much smaller:

\begin{conjecture}

For any $\varepsilon>0$ the inequality

$$R(x)\ll_{\varepsilon} x^\varepsilon$$

is true.
\end{conjecture}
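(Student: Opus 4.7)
The target is to show that $S(x,h) := \sum_{x < n \leq x+h} \mathbf 1_{\mathcal S}(n) \geq 1$ for all $h \geq x^\varepsilon$ and all sufficiently large $x$; by Landau's theorem the ``expected'' size of $S(x,h)$ in this range is $\asymp h/\sqrt{\log x}$, so the real task is to control the fluctuation around the main term all the way down to scale $h = x^\varepsilon$.

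The natural approach is analytic. I would replace the sharp interval by a smooth bump $\Phi$ supported in $[-1,1]$ and study
$$T(x,h) := \sum_n \mathbf 1_{\mathcal S}(n)\,\Phi\!\left(\frac{n-x}{h}\right)$$
via the Mellin transform of $F(s) = \sum_{n \in \mathcal S} n^{-s}$. Using Jacobi's identity $r_2(n) = 4\sum_{d\mid n}\chi_{-4}(d)$ and the factorization $\zeta_{\mathbb Q(i)}(s) = \zeta(s)L(s,\chi_{-4})$, one verifies that $F(s)$ has a branch-point singularity of type $(s-1)^{-1/2}$ at $s=1$. A Perron-type contour shift should produce the main term $Kh\,\widehat\Phi(0)/\sqrt{\log x}$ plus an error governed by integrals of $F(s)$ along vertical lines inside the critical strip; the remaining task is to bound that error by $o(h/\sqrt{\log x})$ when $h = x^\varepsilon$.

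The hard part, and the reason this is only a conjecture, is precisely this bound on the error. Even under the Riemann Hypothesis for $\zeta(s)L(s,\chi_{-4})$, the vertical integrals of $F$ give an error term that is polynomial in $x$ and seems to stay far from $x^\varepsilon$. To reach the conjectured exponent one would need either (i) highly non-trivial subconvex estimates for short character sums twisted by the divisor function of $\mathbb Z[i]$, or (ii) a combinatorial sieve decomposition of $\mathbf 1_{\mathcal S}$ into Type~I and Type~II pieces whose Fourier duals over lengths of order $x^{1-\varepsilon}$ still exhibit genuine cancellation. Both routes collide with the same barrier that blocks analogous short-interval questions for primes, and a full proof appears to require techniques beyond present $L$-function technology; accordingly my plan can realistically aim only at partial progress, such as moment estimates for $R(x)$ of the sort the present paper pursues, rather than the conjecture itself.
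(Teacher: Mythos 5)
The statement you were asked to prove is labeled a \emph{Conjecture} in the paper, and the paper offers no proof of it: it is an open problem, recorded only to contrast with the elementary bound $R(x)\ll x^{1/4}$ of Theorem 1 and with the moment estimates the paper actually establishes. Your proposal correctly recognizes this --- you do not produce a proof, and you say so explicitly in your final sentence --- so the ``gap'' here is not a flaw in your reasoning but the absence of any complete argument, which is exactly the current state of the art. No reviewer should accept, and you rightly do not claim, that the outline constitutes a demonstration of $R(x)\ll_\varepsilon x^\varepsilon$.

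As a sketch of why the problem is hard, what you wrote is essentially sound: the reformulation as ``every interval of length $\asymp x^\varepsilon$ around $x$ meets $\mathcal S$'' is the right reading of the conjecture, the Landau-type density $\asymp h/\sqrt{\ln x}$ is the correct expected main term, and the $(s-1)^{-1/2}$ branch point of $\sum_{n\in\mathcal S}n^{-s}$ does force a Hankel-contour analysis rather than a residue computation. Two cautions. First, your remark about what RH for $\zeta(s)L(s,\chi_{-4})$ would yield should be phrased more conservatively: no conditional improvement of the exponent $1/4$ for individual gaps is in the literature, so ``seems to stay far from $x^\varepsilon$'' is the most one can responsibly assert, and it should not be presented as a computed consequence. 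Second, note that the paper's own contribution (Theorem 4) proceeds by an entirely different, unconditional mechanism --- the Bessel-function kernel $S(N,M)=\sum_{n\ge 0}r_2(n)J_0(2\pi\sqrt{Nn})e^{-\pi n/M}$, which Lemmas 3--5 show is provably small whenever $R(N)$ is large, combined with the $L^2$ bound of Lemma 6 --- and this machinery yields only measure and moment statements about the exceptional set, not a pointwise bound. If your aim is the partial progress you mention at the end, that construction, rather than a Perron-type contour shift, is the one to study.
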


As for the large values of $R(x)$, by the work \cite{rich} of I.\,Richards, we have the following:

\begin{theorem}
For any $\varepsilon>0$ there exist infinitely many positive integers $x$ such that

$$R(x)>\left(\frac14-\varepsilon\right)\ln x.$$
\end{theorem}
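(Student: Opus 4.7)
I would exhibit, for each large $m$, a block of $L$ consecutive integers $m+1, m+2, \ldots, m+L$ containing no sum of two squares, with $L \geq (\tfrac12 - \varepsilon')\ln m$. The midpoint $x$ of such a block then satisfies $R(x) \geq L/2 \geq (\tfrac14 - \varepsilon)\ln x$, and letting $m \to \infty$ yields infinitely many such $x$. By Fermat's two-square theorem, $n \in \mathcal S$ iff $v_p(n)$ is even for every prime $p \equiv 3 \pmod 4$, so it suffices to assign to each $i \in \{1,\ldots,L\}$ a prime $p_i \equiv 3 \pmod 4$ with $p_i\|(m+i)$; the Chinese Remainder Theorem then produces $m$ via the prescriptions $m+i \equiv p_i \pmod{p_i^2}$. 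With distinct $p_i$'s the modulus is $\prod p_i^2$, so $m \leq \prod p_i^2$. Taking the $p_i$'s to be the first $L$ primes $\equiv 3 \pmod 4$ gives $\sum \ln p_i \sim L \ln L$ by the PNT in arithmetic progressions, yielding only $L \gg \ln m / \ln\ln m$ --- short of the target by a factor of $\ln\ln m$.

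To close this gap, I would reuse primes, constructing a covering of $\{1, \ldots, L\}$ by residue classes $a_p \pmod p$ as $p$ ranges over primes $\equiv 3 \pmod 4$ with $p \leq P$. For each such prime, a single congruence $m \equiv p - a_p \pmod{p^2}$ forces $p \mid (m+i)$ for every $i$ in the corresponding class, and the extra condition $p^2 \nmid (m+i)$ amounts to avoiding at most $\lceil L/p\rceil$ residues of a single unit modulo $p$ --- automatic once $p > \sqrt L$. By the PNT in arithmetic progressions, $\prod_{p \leq P,\,p \equiv 3(4)} p^2 = \exp(2\theta_3(P)) \sim e^P$, so a covering built from primes $p \leq P = (2+o(1))L$ would give $\ln m \leq (2+o(1))L$, hence $L \geq (\tfrac12 - \varepsilon)\ln m$, exactly what is needed.

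\textbf{Main obstacle.} The delicate point is producing such a covering --- a Westzynthius--Erd\H{o}s-type combinatorial problem, restricted to primes $\equiv 3 \pmod 4$. My approach would be to first use the small primes $p \leq \sqrt L$ greedily to cover the bulk of $\{1,\ldots,L\}$ (leaving a Mertens-type residual of density $\asymp 1/\sqrt{\ln L}$), and then patch the remaining uncovered positions one by one with distinct medium primes in the range $\sqrt L < p \leq P$. Arranging the residue choices so that the greedy stage covers efficiently while enough medium primes remain to absorb the residual, all compatibly with the unit-avoidance condition for each $p$, is the technical heart of the argument.
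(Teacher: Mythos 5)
The paper itself does not prove this statement: Theorem 2 is quoted from Richards \cite{rich} without an internal argument, so the comparison must be against what such a proof actually requires. Your reduction is the standard and correct one — Fermat's two-square criterion, one congruence $m \equiv p - a_p \pmod{p^2}$ per prime $p\equiv 3\pmod 4$ at a cost of $2\log p$ in the modulus, the observation that the unit-avoidance condition is free for $p>\sqrt{L}$, and the accounting $\ln m \leq (1+o(1))P$ when all primes $p\equiv 3 \pmod 4$ up to $P$ are used. The genuine gap is exactly at the step you call the technical heart: the covering does not exist as sketched, for a concrete counting reason. After the greedy stage with primes $p\leq \sqrt{L}$, $p\equiv 3\pmod 4$, Mertens' theorem in arithmetic progressions leaves $\asymp L(\ln L)^{-1/2}$ uncovered positions, whereas the number of primes $p\equiv 3\pmod 4$ in $(\sqrt{L}, 2L]$ is only $\sim L/\ln L$; patching the residual \emph{one by one with distinct medium primes} is therefore impossible — you are short by a factor of $\sqrt{\ln L}$. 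Letting each medium prime cover a whole residue class does not rescue the scheme either: a greedy choice removes at best a proportion $1/p$ of the surviving uncovered set, and $\prod_{\sqrt{L}<p\leq 2L,\ p\equiv 3 (4)}(1-1/p)$ converges to a positive constant (about $2^{-1/2}$), so the uncovered set shrinks only by a bounded factor and never reaches zero. Any successful argument must exploit the multiplicative structure of the uncovered positions (essentially the $j\leq L$ all of whose prime factors $\equiv 3\pmod 4$ occur to even powers, a set of density $\asymp (\ln L)^{-1/2}$); doing this efficiently is the entire content of Richards' short but delicate paper, and it is not delivered by a generic greedy-plus-patching construction.

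A second issue is the target itself. You aim for $L\geq(\tfrac12-\varepsilon')\ln m$ so that the midpoint gives $R(x)\geq(\tfrac14-\varepsilon)\ln x$; this forces the covering to use only primes up to $(2+o(1))L$. Richards' theorem, as usually stated, produces gaps $s_{n+1}-s_n\geq(\tfrac14-\varepsilon)\ln s_n$, which at the midpoint yields only $R(x)\geq(\tfrac18-\varepsilon)\ln x$; a covering of $\{1,\dots,L\}$ by classes modulo primes $\equiv 3\pmod 4$ bounded by $(2+o(1))L$ is, to my knowledge, not known to exist. So even repairing the covering argument to the level of the cited source would not reach the constant in the displayed inequality by the midpoint route; you should either locate a stronger covering theorem or re-examine whether the constant $\tfrac14$ (rather than $\tfrac18$) is really what \cite{rich} gives for the function $R$.
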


In addition to the the upper and lower bounds, one can also consider the mean values of our function. The best result of this type was proved by C.\,Hooley\,\cite{hoo}:

\begin{theorem}
For any $0<\gamma<\frac53$ we have

$$\sum_{s_{n+1}\leq x} (s_{n+1}-s_n)^\gamma \ll x(\ln x)^{(\gamma-1)/2}.$$
\end{theorem}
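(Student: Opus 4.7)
My plan is to reduce the $\gamma$-th moment of gaps to an integral of the level-set function of $R$, and then to control those level sets by a second-moment estimate for the counting function of $\mathcal S$ in short windows.

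First, on each gap interval $(s_n,s_{n+1})$ one has $R(y)=\min(y-s_n,s_{n+1}-y)$, so $\int_{s_n}^{s_{n+1}}R(y)^{\gamma-1}\,dy\asymp(s_{n+1}-s_n)^{\gamma}$. Summing over $n$ with $s_{n+1}\leq x$ and applying the layer--cake identity yields
$$\sum_{s_{n+1}\leq x}(s_{n+1}-s_n)^{\gamma}\ll\int_0^{\infty}t^{\gamma-2}B(t,x)\,dt,\qquad B(t,x):=\#\{y\leq x:R(y)\geq t\}.$$
I would split this integral at the scale $t_0\asymp\sqrt{\ln x}$ of a typical gap. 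On $t\leq t_0$ the trivial bound $B(t,x)\leq x$ already contributes the target order $\asymp x(\ln x)^{(\gamma-1)/2}$.

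For $t>t_0$, the condition $R(y)\geq t$ is equivalent to $\Delta(y-t,2t)=0$, where $\Delta(a,H):=\#(\mathcal S\cap(a,a+H])$, and Landau's theorem gives $\mathbb{E}\,\Delta(a,H)\sim KH/\sqrt{\ln a}$ with $K$ the Landau--Ramanujan constant. Chebyshev's inequality then produces
$$B(t,x)\ll\frac{\ln x}{t^{2}}\sum_{a\leq x}\bigl(\Delta(a,2t)-\mathbb{E}\,\Delta(a,2t)\bigr)^{2}.$$
Opening the square reduces this to the diagonal contribution $\sum_{n\leq x}\mathbf 1_{\mathcal S}(n)$ plus the correlation sums $\sum_{n\leq x}\mathbf 1_{\mathcal S}(n)\mathbf 1_{\mathcal S}(n+k)$ for $|k|\leq 2t$. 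These correlations are then bounded by using the convolution identity $r_2=4\,\mathbf 1\ast\chi_{-4}$, a Selberg upper-bound sieve for $\mathbf 1_{\mathcal S}$, and mean-value results for the twisted Dirichlet $L$-function. Substituting the resulting variance bound back into the displayed inequality and integrating against $t^{\gamma-2}$ over $t>t_0$ delivers a contribution comparable to the main term exactly as long as $\gamma<5/3$.

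The main obstacle is precisely this variance estimate: the threshold $5/3$ rather than the heuristic value $2$ reflects the loss in the currently controllable off-diagonal correlations of $\mathbf 1_{\mathcal S}$ uniform in the shift $k$. Any strengthening of that off-diagonal bound would enlarge the admissible range of $\gamma$, and it is exactly this gain that the weighted sum of Bessel functions introduced later in the present paper is designed to deliver, by attacking the Voronoi expansion of $\sum_{n\leq a+H}r_2(n)-\sum_{n\leq a}r_2(n)$ directly rather than through a diagonal-plus-off-diagonal decomposition.
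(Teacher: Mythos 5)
This statement is Hooley's theorem; the paper does not prove it but simply cites \cite{hoo}, so there is no internal proof to compare against. Your opening reduction --- replacing the $\gamma$-th moment of the gaps by $\int_0^x R(t)^{\gamma-1}\,dt$ and then by $\int t^{\gamma-2}B(t,x)\,dt$, with the trivial bound $B(t,x)\leq x$ absorbing the range $t\leq\sqrt{\ln x}$ --- is sound for $\gamma>1$ and is in fact exactly the reduction the author uses later for his own Theorem 4. (For $0<\gamma\leq 1$ the layer-cake integral diverges at $t=0$, but that range follows from Landau's theorem and the power-mean inequality, so this is only cosmetic.)

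The genuine gap is the variance step. After Chebyshev you need
$$\sum_{a\leq x}\bigl(\Delta(a,2t)-\mathbb E\,\Delta(a,2t)\bigr)^2=o\!\left(\frac{xt^2}{\ln x}\right),$$
and the tools you invoke cannot produce it. A Selberg-type upper bound for $\sum_{n\leq x}\mathbf 1_{\mathcal S}(n)\mathbf 1_{\mathcal S}(n+k)$ comes with an implied constant strictly larger than the conjectural one; after subtracting $(\mathbb E\,\Delta)^2$, an upper bound with the wrong constant leaves a quantity of size $\gg xt^2/\ln x$, so the variance bound is trivial and Chebyshev returns $B(t,x)\ll x$, which is useless for $t$ above the typical gap scale. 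What is actually required is an asymptotic for the pair correlation of the \emph{indicator} of sums of two squares, uniform in $k\leq 2t$ and with the singular series averaging correctly; this is a well-known open problem (even $\#\{n\leq x:\ n,\,n+1\in\mathcal S\}$ is not known asymptotically --- the Ingham--Estermann results concern $r_2(n)r_2(n+k)$, not $\mathbf 1_{\mathcal S}$). Your own accounting also betrays the difficulty: if the needed variance bound held, the computation you set up would yield the theorem for every $\gamma<2$, not $\gamma<5/3$, so your sketch never actually derives the threshold $5/3$. Hooley obtains it by an entirely different device: he exploits the multiplicative characterization that every integer in a gap interval must be divisible by some prime $p\equiv 3\ (\mathrm{mod}\ 4)$ to an odd power, and counts how many integers of an interval can be so covered, splitting according to the size of that prime; the optimization of that covering argument is what produces $5/3$. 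As written, your proposal replaces the one hard step by an appeal to estimates that are not available.
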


Therefore, for almost all $n$ the inequality $s_{n+1}-s_n \ll \sqrt{\ln s_n}$ holds. More precisely:
\begin{corollary}

Let $g(x)$ be any function that tends to infinity. Then the number of $s_n \leq x$ with $s_{n+1}-s_n\gg g(x)\sqrt{\ln x}$ is $O(\frac{x}{g(x)^\gamma\sqrt{\ln x}})$ for any $\gamma<5/3$.
\end{corollary}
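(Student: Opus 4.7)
The plan is a direct Chebyshev/Markov--type application of Hooley's theorem. Since every gap satisfying $s_{n+1}-s_n\ge cg(x)\sqrt{\ln x}$ contributes at least $(cg(x)\sqrt{\ln x})^\gamma$ to the $\gamma$-th moment sum, a trivial restriction of Hooley's sum to such $n$ produces a lower bound proportional to the count $N(x)$ of interest. Comparing this with Hooley's upper bound and solving for $N(x)$ gives the conclusion.

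Concretely, fix $\gamma\in(0,5/3)$, and let $c>0$ be the implicit constant underlying the hypothesis $s_{n+1}-s_n\gg g(x)\sqrt{\ln x}$. Let $N(x)$ denote the number of $s_n\le x$ for which $s_{n+1}-s_n\ge cg(x)\sqrt{\ln x}$. Restricting the sum in Hooley's theorem to these indices gives
$$\sum_{s_{n+1}\le x}(s_{n+1}-s_n)^\gamma \;\ge\; N(x)\cdot\bigl(cg(x)\sqrt{\ln x}\bigr)^\gamma,$$
while the theorem itself supplies the upper bound $\ll x(\ln x)^{(\gamma-1)/2}$. Solving the resulting inequality,
$$N(x)\;\ll\;\frac{x(\ln x)^{(\gamma-1)/2}}{g(x)^\gamma(\ln x)^{\gamma/2}}\;=\;\frac{x}{g(x)^\gamma\sqrt{\ln x}},$$
where the key arithmetic identity is $(\gamma-1)/2-\gamma/2=-1/2$, independently of $\gamma$.

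There is essentially no genuine obstacle here: all the analytic difficulty has already been absorbed into Hooley's theorem, and the remainder is a one-line moment inequality. The only items to keep straight are that $\gamma$ is a fixed parameter, so the implicit constant in the final $O$-expression may depend on $\gamma$ (as does the one in Hooley's bound), and that summing over $s_n\le x$ versus $s_{n+1}\le x$ differs by at most a single term, which is harmless.
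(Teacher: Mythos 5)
Your argument is correct: the Chebyshev-type restriction of Hooley's moment sum, together with the exponent computation $(\gamma-1)/2-\gamma/2=-1/2$, is exactly how this corollary follows, and the paper itself states it without proof as an immediate consequence of Hooley's theorem in precisely this way. Nothing to add.
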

The main goal of the present work is to improve Hooley's theorem, i.e. to prove analogous estimate for the wider range of values of $\gamma$.

\begin{theorem}

For any $1<\gamma\leq 2$ the inequality

$$\sum_{s_{n+1}\leq x} (s_{n+1}-s_n)^\gamma \ll x(\ln x)^{\frac{3}{2}(\gamma-1)}\delta(x,\gamma)$$

is true, where

$$\delta(x,\gamma)=\begin{cases}
1,&\text{if $\gamma<2$;}\\
\ln x,&\text{if $\gamma=2$.}
\end{cases}$$
\end{theorem}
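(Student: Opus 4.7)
I would convert the gap moment into a second-moment estimate for the short-interval counting function of $\mathcal{S}$, and control that second moment through a sum of Bessel functions coming from the Voronoi expansion of the Gauss circle remainder.

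First, by layer-cake, with $N(x,H)=\#\{n:s_{n+1}\leq x,\ s_{n+1}-s_n>H\}$,
$$\sum_{s_{n+1}\leq x}(s_{n+1}-s_n)^\gamma=\gamma\int_0^\infty H^{\gamma-1}N(x,H)\,dH.$$
Each gap of length $>2H$ contributes an interval of length $\geq H$ to the set $\{y:(y,y+H]\cap\mathcal{S}=\varnothing\}$; denoting the Lebesgue measure of the latter intersected with $[0,x]$ by $M(x,H)$, one has $H\cdot N(x,2H)\leq M(x,H)$, so it suffices to bound $\int H^{\gamma-2}M(x,H)\,dH$. For $M(x,H)$ itself, let $\Phi_H(y)=\sum_{y<n\leq y+H}r(n)$, where $r(n)$ counts representations as a sum of two squares. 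Since $\mathbb{E}_y\Phi_H=\pi H$ and $\Phi_H(y)=0$ precisely on the bad set, Chebyshev's inequality gives
$$M(x,H)\ll H^{-2}\int_0^x\bigl(E(y+H)-E(y)\bigr)^{2}\,dy,\qquad E(y)=\sum_{n\leq y}r(n)-\pi y,$$
reducing the problem to a mean-square estimate for the short-interval Gauss circle remainder.

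Here the Bessel functions enter. By the Voronoi/Hardy identity, $E(y)=y^{1/4}\sum_{n\geq 1}r(n)n^{-3/4}B(n,y)$ with $B(n,y)$ a $J_1$-type kernel of argument $\asymp\sqrt{ny}$. Substituting, squaring the difference $E(y+H)-E(y)$, and integrating in $y$ converts the mean square into a bilinear sum over $(n,m)$ with coefficients $r(n)r(m)$ and weights given by integrals of products of the Bessel kernels. The paper's main technical lemma — the upper bound for the weighted mean value of exactly this sum of Bessel functions — should yield
$$\int_0^x\bigl(E(y+H)-E(y)\bigr)^2dy\ll xH(\ln x)^{3/2},\quad\text{hence}\quad M(x,H)\ll\frac{x(\ln x)^{3/2}}{H}.$$
Combining with the trivial bound $M(x,H)\leq x$ and splitting the integral at $H_0=(\ln x)^{3/2}$, both
$$\int_0^{H_0}H^{\gamma-2}\cdot x\,dH\quad\text{and}\quad\int_{H_0}^{\infty}H^{\gamma-2}\cdot\frac{x(\ln x)^{3/2}}{H}dH$$
evaluate to $\ll x(\ln x)^{3(\gamma-1)/2}$ when $1<\gamma<2$; at $\gamma=2$ the second integral becomes logarithmic in $x$ and produces the extra $\ln x$ factor encoded in $\delta(x,\gamma)$.

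The hard part is the Bessel-sum estimate. The crude bound $\int_0^x E(y)^2dy\ll x^{3/2}$ is $H$-independent and useless, so one must extract cancellation uniformly in $H$ from the difference of kernels $B(n,y+H)-B(n,y)$ and then delicately balance the diagonal $n=m$ against the off-diagonal contributions, drawing arithmetic information from $r(n)$. Pinning down precisely the exponent $3/2$ on the logarithm — with no further loss — is the technical heart of the argument and is exactly the ``weighted mean value of a Bessel sum'' advertised in the abstract.
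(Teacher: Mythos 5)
Your reduction is sound and is essentially equivalent to the paper's: your layer-cake/Chebyshev step, which bounds the measure $M(x,H)$ of the set where an interval of length $H$ misses $\mathcal S$ and then integrates $H^{\gamma-2}M(x,H)$ with a split at $H_0=(\ln x)^{3/2}$, parallels the paper's passage to $\int_0^x R(t)^{\gamma-1}\,dt$ and its dyadic decomposition, and your bookkeeping correctly reproduces the exponent $\tfrac32(\gamma-1)$ and the extra $\ln x$ at $\gamma=2$. The problem is that the entire content of the theorem sits in the one estimate you assert rather than prove, namely
$$\int_0^x\bigl(E(y+H)-E(y)\bigr)^2\,dy\ll xH(\ln x)^{3/2}\quad\text{uniformly down to }H\asymp(\ln x)^{3/2},$$
and this is \emph{not} the paper's main technical lemma. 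The paper never works with the sharp-cutoff short-interval remainder $E(y+H)-E(y)$. Its key object is the doubly smoothed sum $S(N,M)=\sum_{n\ge0}r_2(n)J_0(2\pi\sqrt{Nn})e^{-\pi n/M}$, which arises as $M$ times an integral of a product of two Gaussian theta functions over the circle of radius $\sqrt N$; the event $R(y)\ge H$ is detected geometrically (the circle $C(y)$ stays at distance $\gg H/\sqrt y$ from $\mathbb Z^2$, so the theta product is uniformly small), and the mean square $\int_0^\infty(S(y,M)-1)^2e^{-\pi y/N}\,dy$ is evaluated in closed form via Weber's second exponential integral, producing off-diagonal terms $\asymp I_0(2\pi N\sqrt{nm})e^{-\pi N(n+m)}\ll e^{-\pi N(\sqrt n-\sqrt m)^2}$ that are smaller than any power of $N$ once $H\ge 40(\ln N)^{3/2}$. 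The Gaussian smoothing in both the $n$-sum and the $y$-integral is precisely what makes the bilinear Bessel sum absolutely convergent and the off-diagonal negligible.

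In your version none of this machinery is available: the Voronoi/Hardy expansion of $E(y)$ converges only conditionally, so you must truncate and control the tail in mean square, and the off-diagonal terms after squaring decay only like $x/|\sqrt n-\sqrt m|$ by the first-derivative test, with $n,m$ ranging up to about $x/H^2$, which for $H\asymp(\ln x)^{3/2}$ is nearly $x$. The known sharp short-interval variance results for the circle problem are proved for $H\gg x^{\varepsilon}$, not for logarithmic $H$, so you cannot cite them as a black box; and a back-of-the-envelope off-diagonal count gives $\asymp x(\ln x)^3$, which only barely fits under $xH(\ln x)^{3/2}$ at the endpoint $H=H_0$ and would need a genuine proof with no losses. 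This sharp-cutoff route is essentially the one Plaksin attempted in 1986, and the paper's Remark 1 records that his argument was found to be erroneous and could not be repaired --- which is exactly why the paper replaces $E(y+H)-E(y)$ by the smoothed theta/Bessel construction. As it stands, your proposal correctly identifies the architecture of the proof but leaves its load-bearing estimate unestablished.
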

\begin{remark}
In 1986 V.\,A.\,Plaksin has published an article \cite{Pla} in which he presented the proof of the result similar to the Theorem 3, but for larger range of $\gamma$, namely $0<\gamma<2$. However, later his paper was shown to contain some mistakes (see \cite{Harm}) and attempts to fix the arguments were not successful (see \cite{Pla2}).
\end{remark}
\section{Proof of the main theorem}

In this section, we will prove Theorem 4 by reducing the original problem to the question about the distribution of values of certain sum of Bessel functions.

To do this, we need some lemmas.
Let us start with the transformation formula for the theta-function.
\begin{lemma}

Let  $M$ be a positive real number. For any real $x$ the equality

$$\vartheta_M(x):=\sum_{n \in \mathbb Z} e^{-\pi M(x+n)^2}=\frac{1}{\sqrt M} \sum_{m \in \mathbb Z} e^{2\pi i m x}e^{-\pi m^2/M}$$

is true.
\end{lemma}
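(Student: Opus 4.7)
The plan is to derive this identity as a direct application of the Poisson summation formula, which is the standard approach to theta transformation identities. Recall that Poisson summation states that for a sufficiently nice function $f$ (Schwartz, say), one has $\sum_{n \in \mathbb{Z}} f(n) = \sum_{m \in \mathbb{Z}} \hat{f}(m)$, where $\hat{f}(\xi) = \int_{-\infty}^{\infty} f(t) e^{-2\pi i t \xi}\, dt$. I would apply this to the function $f(t) = e^{-\pi M (x+t)^2}$, which is Schwartz for any fixed real $x$ and positive $M$, so there are no convergence or regularity issues to worry about.

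First I would compute the Fourier transform $\hat{f}(m)$. Substituting $u = x + t$ gives
$$\hat{f}(m) = e^{2\pi i m x}\int_{-\infty}^{\infty} e^{-\pi M u^2} e^{-2\pi i m u}\, du.$$
The remaining integral is the Fourier transform of a Gaussian at frequency $m$, which by completing the square (or by invoking the standard identity that the Gaussian $e^{-\pi M u^2}$ has Fourier transform $M^{-1/2} e^{-\pi \xi^2/M}$) evaluates to $\frac{1}{\sqrt M}\, e^{-\pi m^2/M}$. Hence $\hat{f}(m) = \frac{1}{\sqrt M}\, e^{2\pi i m x}\, e^{-\pi m^2/M}$.

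Summing over $m \in \mathbb{Z}$ and invoking Poisson summation yields the claimed identity. There is no real obstacle here: the only nontrivial ingredient is the self-dual behavior of the Gaussian under the Fourier transform, which is entirely routine. If one wanted to avoid citing Poisson summation directly, an equivalent route would be to expand the $1$-periodic function $x \mapsto \vartheta_M(x)$ as a Fourier series and identify the coefficients via the same Gaussian integral; the result would be identical.
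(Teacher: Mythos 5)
Your proposal is correct and follows essentially the same route as the paper: Poisson summation combined with the self-duality of the Gaussian $e^{-\pi M u^2}$ under the Fourier transform. The only cosmetic difference is that you absorb the shift $x$ into the function and compute the resulting phase factor by substitution, whereas the paper applies the shifted form of Poisson summation to $g(t)=e^{-\pi M t^2}$ directly; the computations are identical.
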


\begin{proof}[Proof]

Consider the function $g(x)=e^{-\pi Mx^2}$ on the real line. It is easy to show that $g$ is a Schwartz function and

$$\vartheta_M(x)=\sum_{n \in \mathbb Z} g(x+n).$$

By the Poisson summation formula, for any $x \in \mathbb R$ we get

$$\sum_{n \in \mathbb Z} g(x+n)=\sum_{m \in \mathbb Z} e^{2\pi i m x}\widehat{g}(m),$$

where $\widehat{g}(\xi)=\int_{\mathbb R} g(x)e^{-2\pi i\xi x}dx$ is a Fourier transform of our function.

On the other hand, it is well known that

$$\int_{\mathbb R} e^{-\pi M x^2-2\pi i \xi x}dx=\frac{1}{\sqrt M}e^{-\pi \xi^2/M}.$$

Using this relation, we obtain the required result.

\end{proof}

With the help of Lemma 1 we will prove the following identity, which will be crucial for the subsequent considerations:
\begin{lemma}

Let $M$ and $N$ be some positive real numbers. Then

$$I(N,M):=\frac{1}{2\pi}\int_{-\pi}^{\pi} \vartheta_M(\sqrt N\sin \varphi)\vartheta_M(\sqrt N\cos \varphi)d\varphi=$$
$$=\frac{1}{M}\sum_{n \geq 0} r_2(n)J_0(2\pi \sqrt{Nn})e^{-\pi n/M}=:\frac{1}{M}S(N,M),$$

where $J_0(x)=\frac{1}{2\pi} \int\limits_{-\pi}^\pi e^{ix\cos\varphi}d\varphi$ is the Bessel function of the first kind of order zero and $r_2(n)$ is the number of pairs $(a,b)$ of integers such that $a^2+b^2=n$.

\end{lemma}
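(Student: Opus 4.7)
The plan is to expand each theta factor via Lemma 1, multiply the two Fourier expansions, interchange integration and summation (justified by absolute convergence coming from the Gaussian weights), and recognize the resulting angular integral as a Bessel function.

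First I would apply Lemma 1 with $x=\sqrt N\sin\varphi$ and $x=\sqrt N\cos\varphi$ respectively, obtaining
\begin{equation*}
\vartheta_M(\sqrt N\sin\varphi)\vartheta_M(\sqrt N\cos\varphi)=\frac{1}{M}\sum_{a,b\in\mathbb Z}e^{2\pi i\sqrt N(a\sin\varphi+b\cos\varphi)}e^{-\pi(a^2+b^2)/M}.
\end{equation*}
Because the factor $e^{-\pi(a^2+b^2)/M}$ decays like a two-dimensional Gaussian in $(a,b)$, while the trigonometric factors are bounded in modulus by $1$, the double series converges absolutely and uniformly in $\varphi$. This justifies exchanging the integral over $\varphi$ with the summation over $(a,b)$.

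Next I would compute the angular integral. For a fixed pair $(a,b)$, write $a\sin\varphi+b\cos\varphi=\sqrt{a^2+b^2}\cos(\varphi-\varphi_0)$ for an appropriate $\varphi_0$ depending on $(a,b)$ (with the convention that when $a=b=0$ the expression is just $0$). By $2\pi$-periodicity and the substitution $\psi=\varphi-\varphi_0$, we get
\begin{equation*}
\frac{1}{2\pi}\int_{-\pi}^{\pi}e^{2\pi i\sqrt N(a\sin\varphi+b\cos\varphi)}d\varphi=\frac{1}{2\pi}\int_{-\pi}^{\pi}e^{2\pi i\sqrt{N(a^2+b^2)}\cos\psi}d\psi=J_0\bigl(2\pi\sqrt{N(a^2+b^2)}\bigr),
\end{equation*}
directly from the integral representation of $J_0$ given in the statement.

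Finally I would regroup the double sum by the value $n=a^2+b^2$. Since the number of lattice points $(a,b)\in\mathbb Z^2$ with $a^2+b^2=n$ is by definition $r_2(n)$, collecting terms gives
\begin{equation*}
I(N,M)=\frac{1}{M}\sum_{n\geq 0}r_2(n)J_0(2\pi\sqrt{Nn})e^{-\pi n/M},
\end{equation*}
which is the claimed identity. I do not foresee a real obstacle here; the only subtle point is the interchange of sum and integral, and this is handled cleanly by the Gaussian decay in $(a,b)$, so the argument is essentially a direct Fourier expansion combined with the standard Bessel representation.
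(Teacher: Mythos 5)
Your proposal is correct and follows essentially the same route as the paper's own proof: expand both theta factors via Lemma 1, interchange sum and integral using the Gaussian decay, reduce the angular integral to $J_0(2\pi\sqrt{N(a^2+b^2)})$ by the substitution $\varphi\mapsto\varphi-\varphi_0$ and periodicity, and regroup the lattice sum by $n=a^2+b^2$ to produce $r_2(n)$. No issues.
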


\begin{proof}[Proof]

Using lemma 1, we find

$$I(N,M)=\frac{1}{2\pi M} \int_{-\pi}^{\pi} \left(\sum_{a \in \mathbb Z} e^{2\pi i a\sqrt{N}\sin\varphi}e^{-\pi a^2/M}\right)\left(\sum_{b \in \mathbb Z} e^{2\pi i b \sqrt{N}\cos\varphi}e^{-\pi b^2/M}\right)d\varphi.$$

Both series are absolutely and uniformly convergent, so we can replace the product of their sums by the double sum and interchange summation and integration. Consequently,

$$I(N,M)=\frac{1}{2\pi M}\sum_{(a,b) \in \mathbb Z} e^{-(a^2+b^2)\pi/M}\int_{-\pi}^{\pi}e^{2\pi i\sqrt N(a \sin \varphi+b\cos\varphi)}d\varphi.$$ 

Let us now compute the inner integral for all integers $a$ and $b$. If $(a,b)=(0,0)$, then the integrand is equal to $1$ and so the integral equals $2\pi$. If, in contrast, $a^2+b^2 \neq 0$, then there exists some $\theta \in [-\pi,\pi]$ such that

$$\sin\theta=\frac{a}{\sqrt{a^2+b^2}}, \cos\theta=\frac{b}{\sqrt{a^2+b^2}}.$$

Therefore, $a\sin \varphi+b\cos\varphi=\sqrt{a^2+b^2}\cos(\varphi-\theta)$. Applying the change of variables $\varphi=\varphi_1+\theta$, we obtain

$$\int_{-\pi}^{\pi} e^{2\pi i \sqrt{N}(a\sin \varphi+b\cos\varphi)}d\varphi=\int_{-\pi-\theta}^{\pi-\theta} e^{2\pi i\sqrt{N(a^2+b^2)}\cos\varphi_1}d\varphi_1.$$

Since the integrand is periodic with the period $2\pi$, the last integral is equal to the integral of the same function over the interval $[-\pi,\pi]$. Thus, we finally find

$$\int_{-\pi}^{\pi} e^{2\pi i \sqrt{N}(a\sin \varphi+b\cos\varphi)}d\varphi=\int_{-\pi}^{\pi} e^{2\pi i\sqrt{N(a^2+b^2)}\cos\varphi_1}d\varphi_1=2\pi J_0(2\pi \sqrt{N(a^2+b^2)}).$$

Substituting the obtained result into the formula for $I(N,M)$, we deduce the identity

$$I(N,M)=\frac{1}{M}\sum_{(a,b) \in \mathbb Z} J_0(2\pi \sqrt{N(a^2+b^2)})e^{-\pi(a^2+b^2)/M}=\frac{1}{M}S(N,M),$$

which was to be proved.
\end{proof}
\begin{remark}
Computing the integral $I(N,M)$ without using Lemma 1 yields

$$I(N,M)=e^{-\pi NM}\sum_{n \geq 0} r_2(n)I_0(2\pi M\sqrt{Nn})e^{-\pi n M}.$$

Therefore, for all $N,M>0$ we have

$$Me^{-\pi NM}\sum_{n \geq 0} r_2(n)I_0(2\pi M\sqrt{Nn})e^{-\pi n M}=\sum_{n \geq 0} r_2(n)J_0(2\pi \sqrt{Nn})e^{-\pi n/M}.$$

Now, if for all $z, \tau \in \mathbb C$, $\mathrm{Im}\,\tau>0$ we have

$$f(\tau,z)=\sum_{n \geq 0} r_2(n)J_0(4\pi \sqrt{n}z)e^{\pi i n\tau}$$

then we get $S(N,M)=f\left(\frac{i}{M},\frac{\sqrt{N}}{2}\right)$ and by identity principle

$$f\left(-\frac{1}{\tau},\frac{z}{\tau}\right)=-i\tau\exp\left(\frac{4\pi i z^2}{\tau}\right)f(\tau,z).$$

So, the function $f(\tau,z)$ behaves like a Jacobi form. This phenomenon can be generalized to the Cohen-Kuznetsov series and provide some results on Rankin-Cohen brackets of modular forms (see \cite{Kuz} and \cite{Coh}).
\end{remark}

Now we want to show that if $R(N)$ is large, then the quantity $S(N,M)$ is rather small.
To prove this, we will need some more additional lemmas.
\begin{lemma}
Let $N>0$, $M>1$ and denote by $C(N) \subset \mathbb R^2$ the circle of radius $\sqrt{N}$ centered in the point $(0,0)$. Let $d(N)$ be the distance between the sets $C(N)$ and $\mathbb Z^2$, that is

$$d(N)=\inf_{\substack{ x \in C(N) \\ y \in \mathbb Z^2}} \rho(x,y),$$

where $\rho$ is a standard euclidean distance on the plane. Then the inequality

$$S(N,M)\leq Me^{-\pi Md(N)^2}+O(Me^{-\pi M/4})$$

holds.
\end{lemma}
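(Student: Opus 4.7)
The plan is to start from Lemma 2, which gives $S(N,M)=M\cdot I(N,M)$, and to bound the integrand of $I(N,M)$ pointwise in $\varphi$. The key observation is that for $M>1$ the theta series $\vartheta_M(x)=\sum_{n\in\mathbb Z}e^{-\pi M(x+n)^2}$ concentrates sharply near integer values of $x$. Writing $\|x\|$ for the distance from $x$ to the nearest integer, one can shift the summation so that the nearest integer is $0$; then the $n=0$ term contributes $e^{-\pi M\|x\|^2}$, while for every other $n$ we have $(x+n)^2\geq(|n|-1/2)^2$, and summing the remaining exponentials as a geometric series yields the pointwise bound
$$\vartheta_M(x)=e^{-\pi M\|x\|^2}+O(e^{-\pi M/4}).$$

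Applying this at $x=\sqrt N\sin\varphi$ and at $x=\sqrt N\cos\varphi$, and using that each of the two resulting factors is $\leq 1+O(e^{-\pi M/4})=O(1)$ to control the cross terms of the product, one obtains
$$\vartheta_M(\sqrt N\sin\varphi)\vartheta_M(\sqrt N\cos\varphi)\leq e^{-\pi M(\|\sqrt N\sin\varphi\|^2+\|\sqrt N\cos\varphi\|^2)}+O(e^{-\pi M/4}).$$
The geometric input now enters: the point $(\sqrt N\sin\varphi,\sqrt N\cos\varphi)$ lies on $C(N)$, and its distance to the closest lattice point of $\mathbb Z^2$ is exactly $\sqrt{\|\sqrt N\sin\varphi\|^2+\|\sqrt N\cos\varphi\|^2}$. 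By the definition of $d(N)$ this distance is at least $d(N)$, so the exponent in the main term is bounded by $-\pi M d(N)^2$ uniformly in $\varphi$.

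Integrating this pointwise bound over $\varphi\in[-\pi,\pi]$, dividing by $2\pi$, and finally multiplying by $M$ produces the asserted estimate. The only technical step is the tail bound for $\vartheta_M$, but it is routine thanks to the Gaussian decay; the real content of the lemma is the observation that the exponent $\|\sqrt N\sin\varphi\|^2+\|\sqrt N\cos\varphi\|^2$ coincides with the squared Euclidean distance from $(\sqrt N\sin\varphi,\sqrt N\cos\varphi)\in C(N)$ to $\mathbb Z^2$, which is precisely what allows $d(N)$ to enter the estimate.
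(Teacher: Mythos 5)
Your proposal is correct and follows essentially the same route as the paper: apply Lemma 2 to write $S(N,M)$ as $M$ times the theta integral, use the pointwise bound $\vartheta_M(x)=e^{-\pi M\|x\|^2}+O(e^{-\pi M/4})$, and observe that $\|\sqrt N\cos\varphi\|^2+\|\sqrt N\sin\varphi\|^2$ is the squared distance from a point of $C(N)$ to $\mathbb Z^2$, hence at least $d(N)^2$. You additionally spell out the geometric-series tail estimate for $\vartheta_M$, which the paper leaves implicit.
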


\begin{proof}[Proof]
Let us notice first that for any $\varphi \in [-\pi,\pi]$ the inequality

$$||\sqrt{N}\cos\varphi||^2+||\sqrt{N}\sin\varphi||^2 \geq d(N)^2$$

holds. Indeed, for some integers $a$ and $b$ we have

$$||\sqrt{N}\cos\varphi||^2+||\sqrt{N}\sin\varphi||^2=(\sqrt{N}\cos\varphi-a)^2+(\sqrt{N}\sin\varphi-b)^2=$$
$$=\rho^2((\sqrt{N}\cos\varphi,\sqrt{N}\sin\varphi),(a,b))\geq d(N)^2,$$

because $(\sqrt{N}\cos\varphi,\sqrt{N}\sin\varphi) \in C(N)$ and $(a,b) \in \mathbb Z^2$. Then from the Lemma 2 we obtain

 $$S(N,M)=\frac{M}{2\pi}\int_{-\pi}^{\pi} \vartheta_M(\sqrt{N}\cos\varphi)\vartheta_M(\sqrt{N}\sin\varphi)d\varphi=$$
 $$=\frac{M}{2\pi}\int_{-\pi}^{\pi} \left(e^{-\pi M||\sqrt{N}\cos\varphi||^2}+O(e^{-\pi M/4})\right)\left(e^{-\pi M||\sqrt{N}\sin\varphi||^2}+O(e^{-\pi M/4})\right)d\varphi.$$
 
 Therefore, from the previous observation we get the desired inequality.
\end{proof}
It turns out that the distance between $C(N)$ and $\mathbb Z^2$ is closely related to the distance from $N$ to the nearest sum of two squares. More precisely, the following proposition holds:

\begin{lemma}
For any $N>0$ we have

$$d(N)\geq \frac{2R(N)}{5\sqrt{N}}$$
\end{lemma}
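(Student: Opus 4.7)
The plan is to translate the geometric distance $d(N)$ from $\mathbb Z^2$ to the circle $C(N)$ into an arithmetic distance from $N$ to a sum of two squares. The key observation is that for any non-zero lattice point $(a,b)$, the nearest point of $C(N)$ lies on the ray from the origin through $(a,b)$, so the Euclidean distance from $(a,b)$ to $C(N)$ equals $|\sqrt{a^2+b^2} - \sqrt N|$; meanwhile $(0,0)$ is at distance exactly $\sqrt N$ from $C(N)$. If $n := a^2+b^2$ for a lattice point realising the infimum defining $d(N)$, then the factorisation $|n - N| = |\sqrt n - \sqrt N|(\sqrt n + \sqrt N)$ lets us pass from $d(N) = |\sqrt n - \sqrt N|$ to the arithmetic quantity $|n - N| \geq R(N)$.

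I would split the argument into two cases according to the size of $d(N)$. If $d(N) \geq \sqrt N/2$, the trivial bound $R(N) \leq N$ (valid because $1 \in \mathcal S$) immediately gives
$$\frac{2R(N)}{5\sqrt{N}} \leq \frac{2\sqrt N}{5} \leq \frac{\sqrt N}{2} \leq d(N).$$
If instead $d(N) < \sqrt N/2$, the infimum defining $d(N)$ cannot be attained at $(0,0)$, so it must be realised by some $(a_0,b_0) \neq (0,0)$ with $n := a_0^2+b_0^2 \in \mathcal S$. The inequality $|\sqrt n - \sqrt N| = d(N) < \sqrt N/2$ forces $\sqrt n \leq 3\sqrt N/2$ and hence $\sqrt n + \sqrt N \leq \tfrac{5}{2}\sqrt N$. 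Therefore
$$R(N) \leq |n - N| = d(N)(\sqrt n + \sqrt N) \leq \tfrac{5}{2}\sqrt N\, d(N),$$
which rearranges to the desired bound.

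There is no serious obstacle; the argument is essentially a two-line calculation once the correct case split has been spotted. The main subtlety is that the constant $5/2$ only emerges because of the restriction to the regime $d(N) \leq \sqrt N/2$: without such a truncation, $\sqrt n$ could in principle be arbitrarily large and no bound of the form $d(N) \gg R(N)/\sqrt N$ would follow from the factorisation above.
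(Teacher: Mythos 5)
Your proof is correct and follows essentially the same route as the paper: identify the minimizing lattice point, use collinearity with the origin to write $d(N)=|\sqrt{n}-\sqrt{N}|$, factor $|n-N|=|\sqrt{n}-\sqrt{N}|(\sqrt{n}+\sqrt{N})$, and split into cases to bound $\sqrt{n}+\sqrt{N}\leq\tfrac{5}{2}\sqrt{N}$ (the paper splits on $n\leq 2N$ rather than on $d(N)<\sqrt{N}/2$, but this is the same idea with the same constants). No issues.
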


\begin{proof}[Proof]
Let $O=(0,0)$ and $A \in C(N), B \in \mathbb Z^2$ be the points such that

$$d(N)=\rho(A,B)$$

(points with this condition certainly exist because $C(N)$ is compact) Then the points $A, B$ and $O$ are collinear, because if this is not the case then the triangle $ABO$ is nondegenerate and thus

$$\rho(A,B)>|\rho(A,O)-\rho(B,O)|.$$

But then if we take $D$ to be the intersection point of the line $BO$ and the circle $C(N)$, which lies closer to $B$ than the second point, we get

$$\rho(D,B)=|\rho(D,O)-\rho(B,O)|=|\rho(A,O)-\rho(B,O)|<\rho(A,B),$$

which is a contradiction. Therefore,

$$d(N)=\rho(A,B)=|\rho(A,O)-\rho(B,O)|=|\sqrt{N}-\sqrt{a^2+b^2}|$$

for some integers $a$ and $b$. So we get

$$d(N)=\frac{|N-a^2-b^2|}{\sqrt{N}+\sqrt{a^2+b^2}}$$

and we clearly can assume that $a^2+b^2\leq 2N$, as otherwise we have

$$d(N)\geq (\sqrt{2}-1)\sqrt{N}>\frac{2}{5}\sqrt{N}\geq \frac{2R(N)}{5\sqrt{N}},$$

because $R(N)\leq |N-0|=N$. Now, if $a^2+b^2 \leq 2N$ then $\sqrt{N}+\sqrt{a^2+b^2} \leq (\sqrt{2}+1)\sqrt{N}<5\sqrt{N}/2$.  By the definition of $R(N)$ we also have

$$|N-a^2-b^2| \geq R(N)$$

and so

$$d(N) \geq \frac{R(N)}{5\sqrt{N}/2}=\frac{2R(N)}{5\sqrt{N}}.$$
\end{proof}

Combination of Lemma 3 and Lemma 4 gives us

\begin{lemma}
Let $H>1, N>3$ and $R(N) \geq H$. If $M \geq \frac{2N\ln N}{H^2}$ then 

$$S(N,M) \ll N^{-1/200}$$

\end{lemma}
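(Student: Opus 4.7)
The plan is to feed the hypotheses directly into Lemmas 3 and 4 and verify that the resulting arithmetic produces the claimed decay $N^{-1/200}$. The only slightly delicate point is a tight numerical inequality; the rest is routine.

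First I would use Lemma 4 together with $R(N)\ge H$ to get $d(N)^2\ge 4H^2/(25N)$. Substituting the hypothesis $M\ge M_0:=2N\ln N/H^2$ into the exponent of the leading term in Lemma 3 gives
$$\pi M d(N)^2\ge \pi M_0\cdot\frac{4H^2}{25N}=\frac{8\pi\ln N}{25},$$
so in particular $e^{-\pi M d(N)^2}\le N^{-8\pi/25}$. To control the full product $Me^{-\pi M d(N)^2}$ uniformly for $M\ge M_0$ (and not just at $M_0$), I would use that $M\mapsto Me^{-cM}$ is decreasing for $M>1/c$: since $\pi M_0 d(N)^2\ge 8\pi\ln N/25>1$ for $N>3$, this yields $Me^{-\pi M d(N)^2}\le M_0 e^{-\pi M_0 d(N)^2}\le 2N\ln N\cdot N^{-8\pi/25}$, using $H>1$ to estimate $M_0\le 2N\ln N$. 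The crucial numerical inequality is $8\pi/25>1+1/200$, equivalent to $\pi>3.140625$, which holds but only barely. Combined with $\ln N=O_\varepsilon(N^\varepsilon)$, this gives the main term $\ll N^{-1/200}$.

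For the error term $O(Me^{-\pi M/4})$ from Lemma 3, I would invoke Theorem 1 to conclude $H\le R(N)\ll N^{1/4}$, so $M_0\gg \sqrt N\ln N$. Since $Me^{-\pi M/4}$ is decreasing for $M>4/\pi$, it is bounded on $[M_0,\infty)$ by $M_0 e^{-\pi M_0/4}$, which decays super-polynomially in $N$ and is therefore trivially $\ll N^{-1/200}$. Combining these bounds completes the argument. The main obstacle, such as it is, is the tight inequality $8\pi/25>1+1/200$: the specific constant $1/200$ in the statement appears precisely because of how narrowly $\pi$ exceeds $3.140625$, with essentially no slack in the argument.
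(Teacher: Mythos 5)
Your proposal is correct and follows essentially the same route as the paper: combine Lemma 3 with the lower bound $d(N)\geq 2H/(5\sqrt N)$ from Lemma 4, use the monotonicity of $M\mapsto Me^{-cM}$ to reduce to $M=2N\ln N/H^2$, and observe that the resulting exponent $8\pi/25$ exceeds $1+1/200$, while the secondary term $Me^{-\pi M/4}$ is negligible because $M\gg\sqrt N\ln N$. No gaps; your identification of the tight inequality $\pi>3.140625$ as the source of the constant $1/200$ matches the paper's computation.
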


\begin{proof}[Proof]
From the Lemma 3 we get

$$S(N,M) \ll Me^{-\pi Md(N)^2}+Me^{-\pi M/4}.$$

As $R(N)\ll N^{1/4}$, the inequality

$$M \gg \frac{N\ln N}{R(N)^2} \gg \sqrt{N}\ln N$$

holds and so

$$Me^{-\pi M/4} \ll N^{-1/200}.$$

Now, Lemma 4 gives us

$$d(N) \geq \frac{2R(N)}{5\sqrt{N}} \geq \frac{2H}{5\sqrt{N}}.$$

Consider the function

$$t(M)=Me^{-\pi d(N)^2 M}.$$

We have

$$t'(M)=(1-\pi d(N)^2 M)t(M)/M,$$

so the quantity $t(M)$ is decreasing when

$$M>\frac{1}{\pi d(N)^2}.$$

Furthermore,

$$M\geq \frac{2N\ln N}{H^2} \geq \frac{25N}{4\pi H^2}\geq\frac{1}{\pi d(N)^2}.$$

Therefore,

$$t(M) \leq t\left(\frac{2N\ln N}{H^2}\right) \ll Ne^{-2\pi Nd(N)^2\ln N/H^2}.$$

And now we have by the previous estimates

$$\frac{2\pi Nd(N)^2\ln N}{H^2} \geq \frac{8\pi}{25}\ln N>1.005\ln N,$$

 so
 
  $$t(M) \ll N^{-0.005}=N^{-1/200}.$$
  
 Consequently,
 
 $$S(N,M) \ll t(M)+Me^{-\pi M/4} \ll N^{-1/200},$$
 
 which was to be proved.
\end{proof}

So, if $R(N)$ is sufficiently large, then the quantity $S(N,M)$ is close to 0. But the function $J_0(x)$ is oscillating for $x\to \infty$ and so $S(x,M)-1$ is an infinite sum of certain oscillating functions. Therefore, because of possible cancellation it is reasonable to expect that $S(x,M)$ is close to $1$. More precisely, the following $L^2$-estimate holds:

\begin{lemma}
Let $N\geq 2$, $H \geq 40(\ln N)^{3/2}$ and $M=\frac{2N\ln N}{H^2} \geq 1$. Then the inequality

$$J(N,M)=\int_0^{N} (S(x,M)-1)^2dx\ll \sqrt{NM}\ln N$$

holds.
\end{lemma}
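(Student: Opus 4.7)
The plan is to expand the square $(S(x,M)-1)^2$ as a double series and integrate termwise. Writing $S(x,M)-1=\sum_{n\geq 1} r_2(n) J_0(2\pi\sqrt{xn}) e^{-\pi n/M}$ and using absolute convergence (guaranteed by the factor $e^{-\pi n/M}$ and the uniform bound $|J_0(y)|\leq 1$), one gets
$$J(N,M) = \sum_{m,n \geq 1} r_2(m) r_2(n) e^{-\pi(m+n)/M}\, I_{m,n}(N),$$
where $I_{m,n}(N) = \int_0^N J_0(2\pi\sqrt{xm}) J_0(2\pi\sqrt{xn})\,dx$. The substitution $u=\sqrt x$ turns $I_{m,n}(N)$ into $2\int_0^{\sqrt N} u J_0(2\pi u\sqrt m) J_0(2\pi u\sqrt n)\,du$, which is evaluated in closed form by the classical Lommel identity: for $m=n$ it equals $N[J_0(2\pi\sqrt{Nn})^2+J_1(2\pi\sqrt{Nn})^2]$, and for $m\neq n$ it has the shape $\frac{\sqrt N}{\pi(m-n)}\bigl[\sqrt m\, J_0(2\pi\sqrt{Nn})J_1(2\pi\sqrt{Nm}) - \sqrt n\, J_0(2\pi\sqrt{Nm})J_1(2\pi\sqrt{Nn})\bigr]$.

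For the diagonal contribution I would apply the bound $J_0(y)^2+J_1(y)^2\ll 1/y$ valid for $y\geq 1$ (applicable since $n,N\geq 1$ gives $2\pi\sqrt{Nn}\geq 1$). This bounds the diagonal by $\sqrt N\sum_{n\geq 1} r_2(n)^2 n^{-1/2} e^{-2\pi n/M}$. Using the classical second moment $\sum_{n\leq X} r_2(n)^2 \ll X\ln X$ (which follows from the double pole at $s=1$ of $\sum r_2(n)^2 n^{-s}=4\zeta(s)^2L(s,\chi_{-4})^2$), Abel summation together with the exponential cutoff at $n\sim M$ yields $\sum_n r_2(n)^2 n^{-1/2} e^{-2\pi n/M} \ll \sqrt{M}\ln M$, so the diagonal is $\ll \sqrt{NM}\ln N$, exactly the target order.

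For the off-diagonal I would use $|J_k(y)|\ll y^{-1/2}$ to bound each Bessel product by $N^{-1/2}(mn)^{-1/4}$, reducing the problem to estimating
$$\sum_{m>n\geq 1} r_2(m) r_2(n) e^{-\pi(m+n)/M}\,\frac{m^{1/4}}{(m-n)n^{1/4}}.$$
Writing $k=m-n$, splitting $m^{1/4}/n^{1/4}\leq 1+(k/n)^{1/4}$, and applying Cauchy-Schwarz in the inner variable $n$, each inner sum is controlled by the unweighted and $n^{-1/2}$-weighted $r_2^2$ moments evaluated above. A short computation gives $\sum_n r_2(n+k)r_2(n)e^{-\pi(2n+k)/M}\ll e^{-\pi k/(2M)}M\ln M$ (and an analogous $n^{-1/4}$-weighted version with the extra factor $M^{-1/4}$). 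Summing in $k$, both pieces produce a total off-diagonal bound of order $M(\ln M)^2$. The main obstacle is to verify that this does not exceed the target; this is where the hypothesis $H\geq 40(\ln N)^{3/2}$ enters: it forces $M=2N\ln N/H^2\leq N/(800(\ln N)^2)$, so that $M(\ln N)^2\leq \sqrt{NM}\,\ln N$ and both the diagonal and off-diagonal fit within the claimed bound.
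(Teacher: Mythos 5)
Your argument is correct, and while it shares the paper's skeleton (expand the square, integrate termwise, split into diagonal and off-diagonal, control the diagonal by the second moment $\sum_{n\le X}r_2(n)^2\ll X\ln X$), the treatment of the off-diagonal is genuinely different. The paper first replaces $J(N,M)$ by the smoothed integral $J^*(N,M)=\int_0^\infty(S(x,M)-1)^2e^{-\pi x/N}dx$, which lets it evaluate each cross term by Weber's second exponential integral as $\frac{N}{\pi}e^{-\pi N(m+n)}I_0(2\pi N\sqrt{mn})\ll\frac{N}{\pi}e^{-\pi N(\sqrt m-\sqrt n)^2}$; for $m\ne m$ in the effective range this is at most $N^{1-2\pi}$, so the whole off-diagonal is $O(N^{-4})$ and essentially free. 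You instead work unsmoothed on $[0,N]$ via the Lommel integral, and your cross terms decay only like $m^{1/4}/((m-n)n^{1/4})$, so the off-diagonal is a genuine shifted-convolution sum contributing $O(M(\ln M)^2)$; you then need the hypothesis $H\ge 40(\ln N)^{3/2}$, i.e.\ $M\le N/(800\ln^2N)$, to absorb this into $\sqrt{NM}\ln N$ --- which is, pleasingly, exactly where the paper spends the same hypothesis (to force $e^{-\pi N(\sqrt m-\sqrt n)^2}\le N^{-2\pi}$). Two small points of care in your version: the ``short computation'' $\sum_n r_2(n+k)r_2(n)e^{-\pi(2n+k)/M}\ll e^{-\pi k/(2M)}M\ln M$ needs a line or two after Cauchy--Schwarz because the shift by $k$ interacts with the exponential weight (one should first peel off $e^{-\pi k/(2M)}$ and note that the surviving factor is $\ll M\ln(M+k)$, which is harmless since the exponential confines $k\lesssim M\ln M$); and for $M$ near $1$ the diagonal bound should be stated as $\ll\sqrt M\ln N$ rather than $\sqrt M\ln M$. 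What your route buys is the avoidance of the smoothing device and of the $I_0$ asymptotics; what the paper's route buys is an off-diagonal that is trivially negligible rather than borderline.
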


\begin{remark}
The constant $40$ here is not optimal, but this has no effect on subsequent considerations.
\end{remark}
To prove this lemma, we need three more propositions. First of them is very well known:

\begin{lemma}

For any $x \geq 1$ we have

$$\sum_{0<n< x} r_2^2(n) \ll x\ln x$$
\end{lemma}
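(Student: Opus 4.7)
The plan is to reduce the sum to the Dirichlet series $D(s)=\sum_{n\geq 1}r_2(n)^2/n^s$, show that $D(s)$ has a double pole at $s=1$, and extract the main term of order $x\ln x$ by a standard Tauberian argument.

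My first step would be to invoke the classical identity $r_2(n)=4\sum_{d|n}\chi(d)$, where $\chi$ is the non-principal Dirichlet character modulo $4$. Since $r_2(n)/4=(\mathbf{1}*\chi)(n)$ is multiplicative, so is its pointwise square. A routine local-factor computation, treating the cases $p=2$, $p\equiv 1\pmod 4$, and $p\equiv 3\pmod 4$ separately, would yield
$$D(s)=\frac{16\,\zeta(s)^2\,L(s,\chi)^2}{\zeta(2s)\,(1+2^{-s})}.$$

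The right-hand side is meromorphic on $\mathrm{Re}\,s>1/2$ with a unique pole at $s=1$, of order two, and standard polynomial-in-$|t|$ growth on vertical lines. Applying Perron's formula with a contour shift past $s=1$ then gives
$$\sum_{n\leq x}r_2(n)^2=c_1 x\ln x+c_2 x+O\!\left(x/\ln^A x\right)$$
for some explicit $c_1>0$ and any fixed $A>0$, which implies the upper bound in the statement.

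The main obstacle is sharpening the trivial bounds to a single logarithm. The inequality $r_2(n)\leq 4d(n)$ combined with the classical $\sum_{n\leq x}d(n)^2\asymp x\ln^3 x$ is off by two logs, and a direct Rankin's-trick estimate $\sum_{n\leq x}r_2(n)^2\leq x^\sigma D(\sigma)$ at $\sigma=1+1/\ln x$ loses one log. One really does need to exploit the precise double-pole structure of $D(s)$. A purely elementary alternative would be to write $r_2(n)^2=16(h*\mathbf{1})(n)$ for the multiplicative function $h$ with Dirichlet series $L(s,\chi_0)L(s,\chi)^2/L(2s,\chi_0)$, which has only a simple pole at $s=1$, deduce $\sum_{m\leq x}h(m)/m=c\ln x+O(1)$ by partial summation, and control the error from replacing $\lfloor x/m\rfloor$ by $x/m$ via Dirichlet's hyperbola method.
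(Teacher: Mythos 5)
The paper never actually proves this lemma: it is introduced with the words ``First of them is very well known'' and used as a black box, so there is no argument in the paper to compare yours with. Your primary route is correct and is the standard one. The Euler-product identity
$$\sum_{n\geq 1}\frac{r_2(n)^2}{n^s}=\frac{16\,\zeta(s)^2L(s,\chi)^2}{\zeta(2s)\,(1+2^{-s})}$$
checks out: with $f=r_2/4=\mathbf 1*\chi$ one has $f(2^k)=1$, $f(p^k)=k+1$ for $p\equiv 1\pmod 4$ and $f(p^k)=\mathbf 1_{2\mid k}$ for $p\equiv 3\pmod 4$, and the resulting local factors $(1-2^{-s})^{-1}$, $(1+p^{-s})(1-p^{-s})^{-3}$ and $(1-p^{-2s})^{-1}$ agree with those of the right-hand side. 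In $\mathrm{Re}\,s>1/2$ the only singularity is the double pole at $s=1$, since $1/\zeta(2s)$ is holomorphic there and $1+2^{-s}$ vanishes only on $\mathrm{Re}\,s=0$; a Perron contour shift to just left of $\sigma=1$, where $1/\zeta(2s)$ is bounded, yields $\sum_{n\le x}r_2(n)^2=4x\ln x+O(x)$, which is more than the lemma needs. You are also right that Rankin's trick or $r_2\le 4d$ each lose logarithms, so the double-pole structure really is the point. The one soft spot is the ``purely elementary alternative'' at the end: the multiplicative function $h$ there is \emph{not} nonnegative (one computes $h(p)=-1$ for $p\equiv 3\pmod 4$), and $L(s,\chi)^2$ is not absolutely convergent at $s=1$, so the step ``deduce $\sum_{m\le x}h(m)/m=c\ln x+O(1)$ by partial summation'' is not automatic; one must first extract the cancellation in $\sum_{d\le y}\chi(d)$ by a hyperbola-type argument before comparing $\lfloor x/m\rfloor$ with $x/m$. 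That route can be completed, but as sketched it conceals the real work. Since your analytic proof stands on its own, the lemma is established.
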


We also need Weber's second exponential integral:

\begin{lemma}
For arbitrary $\alpha,\beta,\gamma>0$ the formula

$$\int_0^{+\infty} e^{-\alpha x}J_0(2\beta\sqrt{x})J_0(2\gamma\sqrt{x})dx=\frac{1}{\alpha}I_0\left(\frac{2\beta\gamma}{\alpha}\right)\exp\left(-\frac{\beta^2+\gamma^2}{\alpha}\right)$$

is true, where $I_0(x)=J_0(ix)$ is the modified Bessel function.

\end{lemma}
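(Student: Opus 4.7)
The plan is to prove the identity by expanding both Bessel functions as power series, integrating term by term against $e^{-\alpha x}$, and recognizing the resulting double series as the Taylor expansion of the right-hand side. Starting from $J_0(z)=\sum_{k\geq 0}(-1)^k(z/2)^{2k}/(k!)^2$, we get
$$J_0(2\beta\sqrt{x})J_0(2\gamma\sqrt{x})=\sum_{j,k\geq 0}\frac{(-1)^{j+k}\beta^{2j}\gamma^{2k}}{(j!)^2(k!)^2}x^{j+k}.$$
The series of absolute values sums to $I_0(2\beta\sqrt{x})I_0(2\gamma\sqrt{x})\leq e^{2(\beta+\gamma)\sqrt{x}}$, which is integrable against $e^{-\alpha x}$ for $\alpha>0$, so Fubini--Tonelli justifies termwise integration. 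Using $\int_0^\infty x^n e^{-\alpha x}dx=n!/\alpha^{n+1}$ yields
$$\text{LHS}=\frac{1}{\alpha}\sum_{p,q\geq 0}\frac{(-1)^{p+q}(p+q)!\,\beta^{2p}\gamma^{2q}}{(p!)^2(q!)^2\,\alpha^{p+q}}.$$

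Next I would expand the right-hand side via $I_0(2\beta\gamma/\alpha)=\sum_m(\beta\gamma/\alpha)^{2m}/(m!)^2$ and $e^{-(\beta^2+\gamma^2)/\alpha}=\sum_{i,j\geq 0}(-1)^{i+j}\beta^{2i}\gamma^{2j}/(\alpha^{i+j}i!j!)$. Collecting by the total powers $p$ of $\beta^2$ and $q$ of $\gamma^2$ (so $m+i=p$, $m+j=q$), the coefficient of $\alpha^{-(p+q)-1}\beta^{2p}\gamma^{2q}$ in RHS equals
$(-1)^{p+q}\sum_{m=0}^{\min(p,q)}\frac{1}{(m!)^2(p-m)!(q-m)!}.$
Matching with the LHS reduces the whole identity to the combinatorial fact
$$\sum_{m=0}^{\min(p,q)}\binom{p}{m}\binom{q}{m}=\binom{p+q}{p},$$
which is the Vandermonde--Chu identity applied after rewriting $\binom{q}{m}=\binom{q}{q-m}$.

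The main obstacle is not any single hard estimate but the bookkeeping: justifying the interchange of sum and integral (handled by the crude $I_0$-bound above) and carrying out the index rearrangement cleanly enough that the Vandermonde step is visible. A slicker alternative would bypass the combinatorics entirely by invoking Neumann's product formula $J_0(A)J_0(B)=\pi^{-1}\int_0^\pi J_0(\sqrt{A^2+B^2-2AB\cos\theta})d\theta$, applying the single-Bessel Laplace transform $\int_0^\infty e^{-\alpha x}J_0(2\sqrt{ax})dx=\alpha^{-1}e^{-a/\alpha}$ (with $a=\beta^2+\gamma^2-2\beta\gamma\cos\theta$), and then recognizing the remaining $\theta$-integral as $\pi I_0(2\beta\gamma/\alpha)$ via $I_0(z)=\pi^{-1}\int_0^\pi e^{z\cos\theta}d\theta$. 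Either route settles the lemma; I would present the series computation, since it is entirely self-contained within the tools already used in the paper.
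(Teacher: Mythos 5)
Your proposal is correct, but it takes a genuinely different route from the paper only in the sense that the paper offers no argument at all: its ``proof'' of this lemma is the bare citation ``See Watson, \emph{A Treatise on the Theory of Bessel Functions}, p.~395, \S 13.31''. Your series computation checks out in every detail: the bound $I_0(2\beta\sqrt{x})I_0(2\gamma\sqrt{x})\leq e^{2(\beta+\gamma)\sqrt{x}}$ does legitimize Fubini--Tonelli, the termwise Laplace transforms produce the stated double series, and the comparison of coefficients of $\alpha^{-(p+q)-1}\beta^{2p}\gamma^{2q}$ reduces exactly to $\sum_{m}\binom{p}{m}\binom{q}{m}=\binom{p+q}{p}$, which is Vandermonde--Chu as you say. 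What your approach buys is self-containedness at the cost of some index bookkeeping; what the paper's citation buys is brevity and deference to a standard reference (Watson's own derivation is essentially your ``slicker alternative''). It is worth noting that this alternative --- Neumann's product formula plus the elementary transform $\int_0^\infty e^{-\alpha x}J_0(2\sqrt{ax})\,dx=\alpha^{-1}e^{-a/\alpha}$ and the integral representation $I_0(z)=\pi^{-1}\int_0^\pi e^{z\cos\theta}\,d\theta$ --- is also closest in spirit to the paper's own Lemma~2, which likewise exploits the integral representation of $J_0$ and a rotation of the angular variable; presenting that version would make the lemma feel native to the paper, while your series version is the more elementary and fully verifiable one. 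Either write-up would be an acceptable replacement for the citation.
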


\begin{proof}[Proof]

See \cite{Wat}, p. 395, section 13.31.

\end{proof}

Furthermore, we will use the asymptotic formula for the modified Bessel function.

\begin{lemma}

For any positive real $x$ we have the following asymptotic formula:

$$I_0(x) \sim \frac{e^x}{\sqrt{2\pi x}}$$

\end{lemma}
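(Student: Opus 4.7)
The plan is to derive the asymptotic by Laplace's method applied to a classical integral representation of $I_0$. First, I would establish the formula
$$I_0(x)=\frac{1}{\pi}\int_0^\pi e^{x\cos\varphi}\,d\varphi,$$
which follows by Taylor-expanding $e^{x\cos\varphi}$, using the standard identity $\int_{-\pi}^{\pi}\cos^{2m}\varphi\,d\varphi=2\pi\binom{2m}{m}4^{-m}$, and matching the result term-by-term against the defining series $I_0(x)=\sum_{m\geq 0}(x/2)^{2m}/(m!)^2$ (the odd powers of $\cos\varphi$ integrate to zero).

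Since $\cos\varphi$ attains its maximum on $[0,\pi]$ at $\varphi=0$, the integral is concentrated near the origin, and the expansion $\cos\varphi=1-\varphi^2/2+O(\varphi^4)$ suggests the heuristic
$$\int_0^\pi e^{x\cos\varphi}\,d\varphi\approx e^x\int_0^{\infty} e^{-x\varphi^2/2}\,d\varphi=e^x\sqrt{\frac{\pi}{2x}}.$$
To make this rigorous I would split the integral at a cutoff $\varphi_0$ satisfying $x^{-1/2}\ll\varphi_0\ll x^{-1/4}$; the convenient choice $\varphi_0=x^{-1/3}$ works. On $[0,\varphi_0]$ the quartic remainder in the exponent is bounded by $O(x\varphi_0^4)=O(x^{-1/3})$, so the integrand equals $e^x e^{-x\varphi^2/2}(1+o(1))$ uniformly; after the substitution $u=\varphi\sqrt{x/2}$ the truncated Gaussian runs over $[0,x^{1/6}/\sqrt{2}]$ and converges to $\sqrt{\pi}/2$. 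On $[\varphi_0,\pi]$ monotonicity of $\cos$ gives $\cos\varphi\leq\cos\varphi_0=1-x^{-2/3}/2+O(x^{-4/3})$, hence $e^{x\cos\varphi}\leq e^x\exp(-x^{1/3}/2+O(x^{-1/3}))$, and the tail is dominated by $\pi e^x\exp(-x^{1/3}/3)$, which is superpolynomially smaller than $e^x/\sqrt{x}$.

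Combining the two pieces and dividing by $\pi$ yields the stated asymptotic $I_0(x)\sim e^x/\sqrt{2\pi x}$. There is no serious obstacle: the only care needed is the calibration of $\varphi_0$, which must be large enough that the truncated Gaussian captures the full integral $\sqrt{\pi}/2$ in the limit, yet small enough that the quartic Taylor error $x\varphi_0^4$ vanishes inside the exponential; the specific choice $\varphi_0=x^{-1/3}$ satisfies both constraints comfortably. Alternative routes would be to quote Watson's treatise (as the paper does for Weber's integral) or to recover the asymptotic from the known Hankel contour representation, but the direct Laplace argument above is both elementary and self-contained.
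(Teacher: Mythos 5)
Your argument is correct, but it takes a genuinely different route from the paper: the paper does not prove this lemma at all, it simply cites Watson's treatise (p.~203, section 7.23), exactly the fallback you mention at the end. Your self-contained derivation is sound throughout. The integral representation $I_0(x)=\frac{1}{\pi}\int_0^\pi e^{x\cos\varphi}\,d\varphi$ follows as you say from termwise integration of the exponential series against the Wallis-type identity, and the Laplace-method analysis is correctly calibrated: with $\varphi_0=x^{-1/3}$ the quartic error $x\varphi_0^4=x^{-1/3}$ is $o(1)$ inside the exponential, the rescaled Gaussian integral runs out to $x^{1/6}/\sqrt{2}\to\infty$ and so captures the full value $\sqrt{\pi}/2$, and the tail bound $\pi e^{x}\exp(-x^{1/3}/3)$ is indeed superpolynomially smaller than $e^x/\sqrt{x}$; assembling the pieces gives $e^x\sqrt{\pi/(2x)}\cdot(1+o(1))$ and division by $\pi$ yields the claim. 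What the paper's approach buys is brevity and reliance on a standard reference; what yours buys is self-containedness and an explicit error control that could in principle be made quantitative (which the paper's later use of the lemma, namely the uniform bound $e^{-y}I_0(y)\ll y^{-1/2}$ for $y$ bounded away from zero, implicitly requires anyway). Either is acceptable here.
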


\begin{proof}[Proof]

See \cite{Wat}, p. 203, section 7.23.
\end{proof}

\begin{proof}[Proof of the Lemma 6]
Let us define

$$J^*(N,M)=\int_0^{+\infty} (S(x,M)-1)^2e^{-\pi x/N}dx.$$

For any $x\leq N$ we have $e^{-\pi x/N}\geq e^{-\pi}$, so for all $N$ and $M$ we get

$$J(N,M)\leq e^{\pi} J^*(N,M)$$

and so it is enough to prove an analogous estimate for $J^*(N,M)$.

Now, by the definition of $S(x,M)$, we have

$$S(x,M)-1=\sum_{n\geq 1} r_2(n)J_0(2\pi \sqrt{nx})e^{-\pi n/M}$$

and the series converge absolutely and uniformly for $x\geq 0$. Therefore we obtain

$$J^*(N,M)=\int_0^{+\infty} (S(x,M)-1)^2e^{-\pi x/N}dx=$$
$$=\sum_{n,m\geq 1} r_2(n)r_2(m)e^{-\pi (n+m)/N} \int_0^{+\infty} J_0(2\pi \sqrt{mx})J_0(2\pi \sqrt{nx})e^{-\pi x/N}dx.$$

The integrals can be computed using the change of variables $x=\frac{N}{\pi}y$ and Lemma 8 as follows:

$$\int_0^{+\infty} J_0(2\pi \sqrt{mx})J_0(2\pi \sqrt{nx})e^{-\pi x/N}dx=\frac{N}{\pi}\int_0^{+\infty} J_0(2\sqrt{\pi nNy})J_0(2\sqrt{\pi m Ny})e^{-y}dy=$$
$$=\frac{N}{\pi}e^{-\pi N(n+m)}I_0(2\pi N\sqrt{nm}).$$

Therefore,
$$J^*(N,M)=\frac{N}{\pi}\sum_{n,m \geq 1}r_2(n)r_2(m)e^{-\pi N(n+m)-\pi (n+m)/m}I_0(2\pi N\sqrt{nm}).$$

We can split this sum into <<diagonal>> and <<non-diagonal>> summands as follows:

$$J^*(N,M)=\frac{N}{\pi}(S_0+S_1),$$

where

$$S_0=\sum_{n\geq 1} r_2(n)^2e^{-2\pi n/M}I_0(2\pi nN)e^{-2\pi nN}$$

and

$$S_1=\sum_{\substack{n,m \geq 1 \\ n\neq m}} r_2(n)r_2(m)e^{-\pi N(n+m)-\pi(n+m)/M}I_0(2\pi N\sqrt{nm}).$$

Let us estimate $S_1$ first. Note that all the summands are positive and for for any pair $(n,m)$ at least one of three pairs of inequalities is true:

$$1\leq n,m\leq 100M\ln N,$$

$$n \geq 1, m>100 M\ln N$$

or

$$m \geq 1, n>100 M\ln N.$$

Thus we deduce

$$S_1\leq \sum_{\substack{1\leq n,m \leq 100M\ln N \\ n\neq m}}r_2(n)r_2(m)e^{-\pi N(n+m)-\pi(n+m)/M}I_0(2\pi N\sqrt{nm})+$$
$$+2\sum_{n\geq 1,m>100M\ln N}r_2(n)r_2(m)e^{-\pi N(n+m)-\pi(n+m)/M}I_0(2\pi N\sqrt{nm})=S_2+2S_3.$$

To estimate the sum $S_3$, let us notice that Lemma 9 gives us

$$e^{-\pi N(n+m)}I_0(2\pi N\sqrt{nm})\ll e^{-\pi N(n+m)+2\pi N\sqrt{nm}}=e^{-\pi N(\sqrt{n}-\sqrt{m})^2} \leq 1.$$

And so we obtain the following inequality:

$$S_3 \ll \sum_{n\geq 1, m>100 M\ln N}r_2(n)r_2(m)e^{-\pi (n+m)/M}=$$
$$=\left(\sum_{n\geq 1} r_2(n)e^{-\pi n/M}\right)\left(\sum_{n>100M\ln N} r_2(n)e^{-\pi n/M}\right).$$

Now, for any $x>0$ we have

$$\sum_{n\leq x} r_2(n) \ll x,$$

so

$$\sum_{n\geq 1} r_2(n)e^{-\pi n/M}\leq \sum_{m=0}^{+\infty} e^{-\pi m}\sum_{n \leq (m+1)M} r_2(n) \ll \sum_{m=0}^{+\infty}(m+1)e^{-m}M \ll M$$

and

$$\sum_{n>100M\ln N} r_2(n)e^{-\pi n/M} \leq \sum_{m=100\ln N}^{+\infty} e^{-\pi m}\sum_{n \leq (m+1)M} r_2(n)\ll \sum_{m=100\ln N} e^{-\pi m}(m+1)M\ll$$
$$\ll M\ln Ne^{-100\pi \ln N} \ll N^{-313}$$

so we finally get

$$S_3 \ll MN^{-313} \ll N^{-312}.$$

Now we turn to $S_2$. For any integer pair $n,m$ with conditions $1\leq n,m \leq 100M\ln N$ we get by the Lemma 9

$$e^{-\pi N(n+m)}I_0(2\pi N\sqrt{nm})\ll e^{-\pi N(n+m-2\sqrt{nm})}.$$

Let us notice that as $n\neq m$ we have

$$n+m-2\sqrt{nm}=(\sqrt{n}-\sqrt{m})^2=\frac{(n-m)^2}{(\sqrt{n}+\sqrt{m})^2}\geq \frac{1}{400M\ln N}.$$

Consequently,

$$-\pi N(n+m)+2\pi N\sqrt{nm} \leq -\frac{\pi N}{400M\ln N}.$$

From this inequality and conditions $M=\frac{2N\ln N}{H^2}$ and $H \geq 40(\ln N)^{3/2}$ we obtain

$$-\pi N(n+m)+2\pi N\sqrt{nm} \leq -\frac{\pi H^2}{800\ln^2 N} \leq -\frac{1600\pi\ln^3 N}{800\ln^2 N} \leq -2\pi\ln N.$$

Therefore,

$$S_2 \ll \sum_{n,m \leq 100M\ln N}r_2(n)r_2(m)N^{-2\pi}=\left(\sum_{n \leq 100M\ln N}r_2(n)\right)^2N^{-2\pi}\ll N^{2-2\pi}\ll N^{-4}.$$

And we deduce for the <<non-diagonal>> summand the estimate

$$S_1 \ll N^{-4}+N^{-312} \ll N^{-4}.$$

It remains to prove that

$$NS_0 \ll \sqrt{NM}\ln N.$$

From the Lemma 9 we deduce

$$e^{-2\pi Nn}I_0(2\pi Nn) \ll \frac{1}{\sqrt{Nn}}.$$

Consequently,

$$S_0\ll \sum_{n\leq 100M\ln N} \frac{r_2(n)^2}{\sqrt{Nn}}e^{-\pi n/M}.$$

Let $K$ be the smallest integer with $2^{K+1}>100M\ln N$. Then we obviously have

$$S_0 \ll \sum_{k=0}^{K} S_0(k),$$

where

$$S_0(k)=\sum_{2^k\leq n<2^{k+1}} \frac{r_2(n)^2}{\sqrt{Nn}}e^{-\pi n/M}.$$

From the Lemma 7 we get

$$S_0(k) \leq \sum_{2^k\leq n<2^{k+1}} \frac{r_2(n)^2}{\sqrt{N}2^{k/2}}e^{-2^k\pi/M}\leq\frac{e^{-2^k\pi/M}}{\sqrt{N}2^{k/2}}\sum_{n<2^{k+1}}r_2(n)^2\ll \frac{(k+1)2^{k/2}e^{-2^k\pi/M}}{\sqrt{N}}.$$

Summing this over all $k\leq K$, we obtain

$$S_0 \ll \sqrt{\frac{M}{N}}\ln N.$$

Consequently,

$$J^*(N,M)=\frac{N}{\pi}(S_0+S_1)\ll \sqrt{NM}\ln N+N^{-3}\ll \sqrt{NM}\ln N$$

and

$$J(N,M)\leq e^\pi J^*(N,M) \ll \sqrt{NM}\ln N,$$

which concludes the proof.

\end{proof}

\begin{lemma}

Let $x,H>3$ and denote by $M(H,x)$ the set of all real $y \leq x$ with $R(y) \geq H$. Then the inequality

$$\mu(M(H,x)) \ll \frac{x(\ln x)^{3/2}}{H}$$

holds, where $\mu$ is the Lebesgue measure.

\end{lemma}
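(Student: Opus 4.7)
The plan is to combine Lemma 5 (which shows $S(y,M)$ is tiny when $R(y)\geq H$) with the $L^2$-estimate of Lemma 6 through a Chebyshev-type inequality. The point is that on $M(H,x)$ the integrand $(S(y,M)-1)^2$ is bounded below by a positive constant, so its mean value controls the measure of that set.

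First I would fix the single parameter $M=\frac{2x\ln x}{H^2}$, chosen so that it works simultaneously for all $y\in M(H,x)$. For any such $y$ with $y>3$, the monotonicity $y\ln y\leq x\ln x$ gives $M\geq\frac{2y\ln y}{H^2}$, so Lemma 5 yields $S(y,M)\ll y^{-1/200}$. Consequently, there is an absolute constant $y_0$ such that $(S(y,M)-1)^2\geq\tfrac14$ for every $y\in M(H,x)$ with $y\geq y_0$.

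I would then split into cases on $H$. In the main range $40(\ln x)^{3/2}\leq H$, Lemma 6 applies directly with $N=x$ (the side condition $M\geq 1$ follows because we may restrict to $H\ll x^{1/4}$ by Theorem 1, forcing $H^2\ll\sqrt{x}\leq 2x\ln x$ for large $x$; for larger $H$ the set $M(H,x)$ is empty). Then
\[
\tfrac14\,\mu\bigl(M(H,x)\cap[y_0,x]\bigr)\leq\int_0^x (S(y,M)-1)^2\,dy\ll\sqrt{xM}\,\ln x=\sqrt{2}\,\frac{x(\ln x)^{3/2}}{H},
\]
and since $\mu(M(H,x)\cap[0,y_0])=O(1)$ is absorbed by the right-hand side (as $H\ll x^{1/4}$ makes $x(\ln x)^{3/2}/H\gg 1$), the desired bound follows. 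In the complementary range $H<40(\ln x)^{3/2}$, the trivial estimate $\mu(M(H,x))\leq x$ is already $\ll x(\ln x)^{3/2}/H$, so nothing more is needed.

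The main obstacle is the choice of $M$: it must be uniformly large enough for Lemma 5 to apply at every $y\in M(H,x)$, yet small enough that $\sqrt{xM}\,\ln x$ matches the target $x(\ln x)^{3/2}/H$. Taking $M$ as a function of $x$ alone (rather than of $y$) is precisely what makes both lemmas compatible inside a single integral, and it is the reason the exponent $\tfrac{3}{2}$ on $\ln x$ appears here — this same exponent propagates to the final bound in Theorem~4.
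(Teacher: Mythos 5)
Your proposal is correct and follows essentially the same route as the paper: choose $M=\frac{2x\ln x}{H^2}$, use Lemma 5 to get $|S(y,M)-1|\gg 1$ on $M(H,x)$ for $y$ large enough, and then apply Chebyshev together with the $L^2$-bound of Lemma 6, with the trivial estimate handling $H<40(\ln x)^{3/2}$. Your extra care about the side conditions ($M\geq 1$ via $H\ll x^{1/4}$, and the monotonicity of $y\ln y$ ensuring Lemma 5 applies uniformly) only makes explicit what the paper leaves implicit.
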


\begin{proof}[Proof]

As this bound is trivial for $H\leq 40(\ln x)^{3/2}$, we can assume that $H \geq 40(\ln x)^{3/2}$. Then from the Lemma 3 we get

$$|S(y,M)|\ll y^{-1/200}$$

for any $y\in M(H,x)$. Therefore for large enough $y \in M(H,x)$ we have

$$|S(y,M)-1| \gg 1.$$

Thus,

$$J(x,M)\geq \int_{M(x,H)} (S(y,M)-1)^2e^{-\pi y/N}dy\gg \mu(M(x,H))-O(1).$$

Consequently, by the Lemma 5,

$$\mu(M(x,H)) \ll \sqrt{xM}\ln x+O(1)=\frac{\sqrt{2}x(\ln x)^{3/2}}{H}+O(1)\ll \frac{x(\ln x)^{3/2}}{H},$$

which was to be proved.

\end{proof}

From this last lemma we deduce the Theorem 4.

\begin{proof}[Proof of Theorem 4]

Observe that $\sum\limits_{s_{n+1} \leq x} (s_{n+1}-s_n)^\gamma \ll \int\limits_0^x R(t)^{\gamma-1}dt$.

Indeed, for any positive integer $n$ we have

$$\int_{s_n}^{s_{n+1}}R(t)^{\gamma-1}dt=\int_{s_n}^{s_{n+1}} \min(t-s_n,s_{n+1}-t)^{\gamma-1}dt=\frac{(s_{n+1}-s_n)^\gamma}{2^{\gamma-1}\gamma}.$$

Summation over all $n$ with condition $s_{n+1} \leq x$ gives the desired result.

Let $k$ be some positive integer. Consider the set $B_k \subset [0,x]$ of all real $y$ with $2^k \leq R(y) \leq 2^{k+1}$.

Due to the Lemma 9, $\mu(B_k) \ll x(\ln x)^{3/2}2^{-k}$. Therefore,

$$\int_{B_k} R(t)^{\gamma-1}dt\leq 2^{(k+1)(\gamma-1)}\mu(B_k) \ll x(\ln x)^{3/2}2^{(\gamma-2)k}.$$

Let  $B$ be the set of $y \in [0,x]$ such that $R(y) \leq c(\ln x)^{3/2}$ with some sufficiently large constant $c$. Trivially, we have $B \subset [0,x]$ and so $\mu(B) \leq x$. Consequently,

$$\int_B R(t)^{\gamma-1}dt\ll \mu(B)(\ln x)^{\frac{3}{2}(\gamma-1)}\leq x(\ln x)^{\frac{3}{2}(\gamma-1)}.$$

Due to the fact that on the interval $[0,x]$ the inequality $R(x) \leq x$ holds, we have

$$[0,x]=B\cup\bigcup_{x \geq 2^k \geq c(\ln x)^{3/2}} B_k.$$

Therefore,

$$\int_0^x R(t)^{\gamma-1}dt=\int_B R(t)^{\gamma-1}dt+\sum_{x \geq 2^k \geq c(\ln x)^{3/2}} \int_{B_k} R(t)^{\gamma-1}dt.$$

Furthermore, we have

$$\int_0^x R(t)^{\gamma-1}dt \ll x(\ln x)^{\frac{3}{2}(\gamma-1)}+\sum_{x \geq 2^k \geq c(\ln x)^{3/2}} x(\ln x)^{3/2}2^{(\gamma-2)k} \ll x(\ln x)^{3/2(\gamma-1)}\delta(x,\gamma).$$

This concludes the proof of Theorem 4.
\end{proof}

\section{Conclusion}

In this work, we constructed certain sum of Bessel functions that is unusually small in the points that are far from numbers that are sums of two squares. The estimate for some mean value of this sum allowed us to prove the upper bound for the measure of the set of points with this property. However, we were not able to prove sharp enough bound for the sum $S(N,M)-1$. Nontrivial estimates for this quantity would allow us to improve the exponent in the inequality $R(x) \ll x^{1/4}$. One can show that our construction works not only for the sums of two squares, but also for the set of values of arbitrary positive definite quadratic form with integer coefficients. It would be also interesting to generalize this construction to the case of indefinite forms.

\section{Acknowledgments}

The author thanks Maxim Aleksandrovich Korolev for the useful comments and discussion.

The author is partially supported by Laboratory of Mirror Symmetry NRU HSE, RF Government grant, ag. \textnumero 14.641.31.0001, the Simons Foundation, the Moebius Contest Foundation for Young Scientists, the <<Young Russian Mathematics>> contest and the Program of the Presidium of the Russian Academy of Sciences \textnumero01 <<Fundamental Mathematics and its Applications>> under grant PRAS-18-01.
\end{fulltext}

\end{document}